%%%%%%%%%%%%%%%%%%%%%%%%%%%%%%%%%%%%%%%%%%%%%%%%%%%%%%%%
\documentclass[12pt,reqno]{amsart}
\setlength{\textheight}{23cm}
\setlength{\textwidth}{16cm}
\setlength{\topmargin}{-0.8cm}
\setlength{\parskip}{0.3\baselineskip}
\hoffset=-1.4cm
%1,2,4,7
\usepackage{color}
\usepackage{cleveref}
\usepackage{lipsum}
\usepackage{amssymb}
\usepackage{graphicx}
\usepackage{epstopdf}
\newtheorem{theorem}{Theorem}[section]

\newtheorem{proposition}[theorem]{Proposition}

\crefname{proposition}{Proposition}{lemmas}
\theoremstyle{definition}

\newtheorem{remark}[theorem]{Remark}

\numberwithin{equation}{section}

\begin{document}

\baselineskip=15.5pt

\title{ON COMPUTING LINEARIZING COORDINATES FROM \\SYMMETRY ALGEBRA}

\author[S. Ali]{Sajid Ali$^{1}$}

\address{$^{1}$School of Electrical Engineering and Computer Science, National University of Sciences and Technology, Islamabad 44000, Pakistan.}

\email{sajid{\_}ali@mail.com}

\author[H. Azad]{Hassan Azad$^{2}$}

\author[S.W. Shah]{Said Waqas Shah$^{2}$}

\address{$^{2}$Abdus Salam School of Mathematical Sciences, GCU, Lahore 54600, Pakistan.}

\email{hassan.azad@sms.edu.pk}

%\address{$^{2^{*}}$Abdus Salam School of Mathematical Sciences, GCU, Lahore 54600, Pakistan.}

\email{waqas.shah@sms.edu.pk}

\author[F. M. Mahomed]{Fazal M. Mahomed$^{3,4}$}\footnote{
FM is Visiting Professor at UNSW for 2020}

\address{$^{3}$DSI-NRF Centre of Excellence in Mathematical and Statistical Sciences, School of Computer Science and Applied Mathematics, University of the Witwatersrand, Johannesburg, Wits 2050, South Africa}
\address{$^4$School of Mathematics and Statistics,
University of New South Wales,
Sydney NSW 2052 Australia}

\email{Fazal.Mahomed@wits.edu.za}
\subjclass[2000]{34A26\, 37C10\, 57R30. }

\keywords{Ordinary differential equations, Linearization, Vector fields, Lie algebras}

\date{}

\begin{abstract}
A characterization of the symmetry algebra of  the $N$th-order ordinary differential equations (ODEs) with maximal symmetry and all third-order linearizable ODEs is given.  This is used to show that such an algebra $\mathfrak{g}$ determines -- up to a point transformation -- only one linear equation whose symmetry algebra is $\mathfrak{g}$ and an algorithmic procedure is given to find the linearizing coordinates.  The procedure is illustrated by several examples from the literature.
\end{abstract}

\maketitle

\tableofcontents

\section{Introduction}\label{se1}
The Lie theory of transformation groups with applications to differential equations, both linear and nonlinear, was initiated by Sophus Lie \cite{lie1,lie2}. It is an elegant systematic approach to unravel continuous symmetries of differential equations. The determining equations for the symmetry group constitute a linear homogeneous overdetermined system of differential equations whose solutions do not rely on the solutions of the original equation if the equation is nonlinear.  Thus its utility is of paramount significance for nonlinear equations that admit symmetry. The symmetry/ies enable reductions and solutions as well as imply equivalence of the said equation to classified canonical forms. One can pursue the inverse problem which involves associating differential equations to realizations of vector fields of Lie algebras as well. Also they provide important practical and theoretical insights on the underlying differential equation. For linear equations, the symmetries are maximal and lead to the study of nonlinear equations that are linearizable via transformations.

This paper is a contribution to the problem of finding the linearizing coordinates for linearizable equations from the structure of the symmetry algebras. The main results are given as Theorems 3.4, 4.1 and 4.2 of Sections 3 and 4, respectively. These results use in an essential way the following results of Mahomed-Leach and Krause-Michel on linearizable ODEs.
\begin{theorem}
\cite{fazal, kra} (a) A necessary and sufficient condition for an $Nth$ order ODE ($N\geq 3$) to be linearizable by a point transformation is the existence of an $N-$dimensional abelian subalgebra of the symmetry algebra of the ODE.
\newline
(b) An $N$th-order ODE with $N\geq 3$ is linearizable by a point transformation if the dimension $m$ of its symmetry algebra is $N+4$, in which case the equation is equivalent to $y^{(N)}=0$. Moreover, if the equation is linearizable and $m\neq N+4,$ then $m$ must be $N+1$ or $N+2$.
\end{theorem}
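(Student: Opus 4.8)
The plan is to prove the two directions of part (a) and then obtain part (b) by combining (a) with a classification of the possible point--symmetry dimensions of \emph{linear} $N$th--order equations; the hypothesis $N\ge 3$ will be used repeatedly. For the easy direction of (a) I would start from the observation that a homogeneous linear equation $L[y]\equiv y^{(N)}+a_{N-1}(x)y^{(N-1)}+\cdots+a_{0}(x)y=0$ admits the $N$ ``solution symmetries'' $X_{i}=u_{i}(x)\,\partial_{y}$, where $\{u_{1},\dots,u_{N}\}$ is a fundamental system; the $X_{i}$ are linearly independent and pairwise commute, so they span an $N$--dimensional abelian subalgebra. An inhomogeneous linear equation is taken to a homogeneous one by the point transformation $(x,y)\mapsto(x,y-y_{p}(x))$ with $y_{p}$ a particular solution, and since both the symmetry algebra of an ODE and the abelianness of a subalgebra are point--invariant, every linearizable equation inherits such a subalgebra.

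For the converse, suppose $\mathfrak{a}=\langle X_{1},\dots,X_{N}\rangle$ is an $N$--dimensional abelian subalgebra of point symmetries, realized by vector fields on the $(x,y)$--plane. Near a point with $X_{1}\ne 0$ I would rectify $X_{1}=\partial_{y}$. Writing $X_{j}=\xi_{j}\partial_{x}+\eta_{j}\partial_{y}$, the relation $[\partial_{y},X_{j}]=0$ forces $\partial_{y}\xi_{j}=\partial_{y}\eta_{j}=0$; if some $\xi_{j_{0}}\ne 0$ the remaining commutators give $\xi_{j}=c_{j}\xi_{j_{0}}$, $\eta_{j}=c_{j}\eta_{j_{0}}+d_{j}$ with constants $c_{j},d_{j}$, whence $X_{j}\in\langle X_{1},X_{j_{0}}\rangle$ for all $j$ and $\dim\mathfrak{a}\le 2<N$, a contradiction. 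So all $\xi_{j}=0$ and, after the coordinate change, $X_{i}=u_{i}(x)\partial_{y}$ with $u_{1},\dots,u_{N}$ linearly independent (shrink the domain so that the Wronskian $W(u_{1},\dots,u_{N})$ does not vanish). Now I would feed this into the determining equation of $y^{(N)}=F(x,y,y',\dots,y^{(N-1)})$: for a vertical field with characteristic $\eta=u_{i}(x)$ one has $D_{x}^{k}\eta=u_{i}^{(k)}$, so invariance reads
\[
u_{i}^{(N)}(x)=\sum_{k=0}^{N-1}u_{i}^{(k)}(x)\,F_{y^{(k)}}\bigl(x,y,y',\dots,y^{(N-1)}\bigr),\qquad i=1,\dots,N.
\]
The matrix $\bigl[u_{i}^{(k)}\bigr]$ is, up to transposition, the (nonsingular) Wronskian matrix, so solving this $N\times N$ system expresses every $F_{y^{(k)}}$ as a function of $x$ alone; integrating gives $F=\sum_{k=0}^{N-1}g_{k}(x)y^{(k)}+h(x)$, i.e.\ the equation is linear in the chosen coordinates. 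This proves (a).

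For (b), by (a) and point--invariance of the symmetry dimension it suffices to list the values $m$ occurring for linear equations, and by the transformation $(x,y)\mapsto(x,y-y_{p}(x))$ it is enough to treat $L[y]=0$. For $N\ge 3$ I would first show that every point symmetry $\xi\partial_{x}+\eta\partial_{y}$ of $L[y]=0$ has $\xi=\xi(x)$, so that the vertical symmetries form an ideal; an auxiliary computation identifies this ideal as $A\oplus\langle y\partial_{y}\rangle$ of dimension $N+1$, $A$ being the abelian ideal of solution symmetries. The projection $\xi\partial_{x}+\eta\partial_{y}\mapsto\xi\partial_{x}$ is then a Lie homomorphism onto a \emph{finite--dimensional} subalgebra of $\mathrm{Vect}(\mathbb{R}_{x})$, so by Lie's classification that image has dimension $0,1,2,3$ and, up to a change of $x$, equals $0$, $\langle\partial_{x}\rangle$, $\langle\partial_{x},x\partial_{x}\rangle$, or $\langle\partial_{x},x\partial_{x},x^{2}\partial_{x}\rangle\cong\mathfrak{sl}(2,\mathbb{R})$; in particular $m\le N+4$. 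To rule out the dimension--$2$ image and identify the maximal case I would pass to the Laguerre--Forsyth canonical form $\tilde y^{(N)}+c_{N-3}(x)\tilde y^{(N-3)}+\cdots+c_{0}(x)\tilde y=0$, whose residual equivalence group is $\mathrm{PSL}(2,\mathbb{R})$ acting on $(c_{0},\dots,c_{N-3})$ with nonzero weights ($c_{j}$ has weight $\pm(N-j)\ne 0$ since $j\le N-3$). The symmetries beyond the ideal are exactly the stabilizer of $(c_{0},\dots,c_{N-3})$ in this action; if it contained $\langle\partial_{x},x\partial_{x}\rangle$, invariance under $\partial_{x}$ would force each $c_{j}$ constant and invariance under $x\partial_{x}$ would force $c_{j}\propto x^{-(N-j)}$, so all $c_{j}\equiv 0$, the equation would be $\tilde y^{(N)}=0$, and its stabilizer is all of $\mathrm{PSL}(2,\mathbb{R})$. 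Hence the stabilizer has dimension $0,1$ or $3$, giving $m\in\{N+1,N+2,N+4\}$, with $m=N+4$ forcing $c_{0}=\cdots=c_{N-3}=0$, i.e.\ point--equivalence to $y^{(N)}=0$. Finally, for an a priori arbitrary $N$th--order equation the claim ``$m=N+4\Rightarrow$ equivalent to $y^{(N)}=0$'' follows because a symmetry algebra of the maximal dimension $N+4$ necessarily contains an $N$--dimensional abelian subalgebra (it is Lie's realization $\mathfrak{sl}(2,\mathbb{R})\ltimes(\mathbb{R}\oplus S^{N-1}\mathbb{R}^{2})$), so (a) reduces us to the linear case.

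The routine steps are the rectification in the sufficiency direction and the prolongation identity. The part I expect to be the main obstacle is the twofold rigidity input needed for (b): (i) showing that $\xi$ depends on $x$ only and that the vertical ideal has dimension exactly $N+1$ for $N\ge 3$ --- precisely the place where the order--$2$ anomaly $y''=0$ is excluded --- and, above all, (ii) the ``gap'' step, i.e.\ the impossibility of a two--dimensional stabilizer, which rests on the relative--invariant (Laguerre--Forsyth semi--invariant) theory of linear ODEs and on the fact that none of the relevant weights vanishes, which is what makes any two--dimensional symmetry of the canonical form collapse to the maximal one.
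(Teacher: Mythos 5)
First, a point of reference: the paper does not prove this theorem at all --- it is quoted verbatim from Mahomed--Leach \cite{fazal} and Krause--Michel \cite{kra} as input for the results of Sections 3--5 --- so there is no in-paper argument to compare against; your proposal has to be judged on its own. Your part (a) is essentially the correct classical argument. The sufficiency direction (solution symmetries $u_i(x)\partial_y$ plus subtraction of a particular solution) is fine, and in the converse the rectification $X_1=\partial_y$ together with the collapse to $\dim\le 2$ when some $\xi_{j_0}\ne 0$ is exactly where $N\ge 3$ enters; feeding $u_i(x)\partial_y$ into the determining equation and inverting the Wronskian matrix does force $F$ to be linear. The only soft spot is the parenthetical ``shrink the domain so that the Wronskian does not vanish'': for merely smooth, linearly independent $u_i$ the Wronskian can vanish identically (Peano's example), so you should either work in the analytic category or argue via a maximal subfamily with somewhere-nonzero Wronskian; this is a repairable technicality, not a conceptual error.

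Part (b) contains a genuine gap at its final step. Your Laguerre--Forsyth argument correctly pins down the possible symmetry dimensions of \emph{linear} equations as $N+1$, $N+2$, $N+4$ (the exclusion of a two-dimensional stabilizer via the nonvanishing weights $N-j\ge 3$ is the right mechanism), and combined with (a) this yields the ``moreover'' clause. But the first assertion of (b) is about an \emph{arbitrary} $N$th-order ODE with $m=N+4$, and your justification --- ``a symmetry algebra of the maximal dimension $N+4$ necessarily contains an $N$-dimensional abelian subalgebra (it is Lie's realization $\mathfrak{sl}(2,\mathbb{R})\ltimes(\mathbb{R}\oplus S^{N-1}\mathbb{R}^2)$)'' --- assumes precisely what must be proved. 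Your dimension bound $m\le N+4$ was derived only for equations already known to be linear; for a general $N$th-order ODE you have established neither that $N+4$ is the maximal possible dimension nor that an $(N+4)$-dimensional symmetry algebra must be (point-equivalent to) Lie's realization. Closing this requires an independent input: either Lie's bound on $\dim$ for $N$th-order ODEs obtained from the determining equations, or the Gonz\'alez-L\'opez--Kamran--Olver classification of finite-dimensional Lie algebras of vector fields in the plane together with a check of which realizations of dimension $N+4$ can act as symmetry algebras of an $N$th-order equation and that each such realization contains an $N$-dimensional abelian ideal. As written, the implication ``$m=N+4\Rightarrow$ linearizable'' is asserted, not proved.
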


The case $N=2$ was completely settled by Lie \cite{lie1}. He showed that a second-order ODE is linearizable if and only if its symmetry algebra is eight dimensional; in this case the symmetry algebra is $\mathfrak{sl}(3,\mathbb{R})$.  Lie \cite{lie1} also determined the symmetry algebra of $y^{(N)}=0$ for $N\ge3$.

Regarding third-order linearizable equations, such an equation can be transformed by a point transformation to one of the canonical forms given by $y'''=0$, $y^{\prime\prime\prime}-(\lambda+\mu )y^{\prime\prime}+\lambda \mu y^{\prime}=0$ $(\lambda,\mu \neq 0)$ or $y^{\prime\prime\prime}- ({\phi^{\prime\prime\prime}}/{\phi^{\prime\prime}})y''=0$, where $\phi^{\prime\prime} \neq 0$; see \cite{kra}.

The main contributions of this paper are: (1) an invariant  characterization of the symmetry algebra of the equation $y^{(N)}=0$ ($N>2$) and of all third-order linearizable ODEs; (2) algorithms for finding linearizing coordinates from a knowledge of the symmetry algebra of the equation.

We have used elementary representation theory to achieve this. The precise results are given in Sections 3 and 4 $-$ as \Cref{propnd}, \Cref{prop4d} and \Cref{prop5d}. Section 5 contains applications to linearizing coordinates for any third-order linearizable equations as well as examples for
higher-order equations with maximal symmetry $-$ using the algorithm given in \Cref{propnd} for $N$th-order equations with maximal symmetry.

A few words about the linearizability problem for ODEs are in order. Almost everything about symmetry analysis of ODEs is implicit in Lie \cite{lie2}.
However, it is desirable to have algorithms that  can be formulated more transparently and that can be implemented using programs like GAP, Maple and Sage.

The algebraic linearization and invariant criteria for scalar second-order ordinary differential equations (ODEs) are known from the classical works of Lie \cite{lie3} and Tress\'e \cite{tres}. These are given in terms of coefficients of the equation. Grissom et al \cite{grissom} used  the Cartan method to obtain these results. Moreover, a  geometric method of projection was utilized \cite{qad} to re-derive the Lie--Tress\'e linearization conditions. The reader is referred to, e.g., Ibragimov \cite{ibr3} and Olver \cite{olver} for modern expositions of symmetries and invariants.

For scalar $N$th-order ($N>2$), the Lie algebraic criteria for linearization by point transformations were deduced in \cite{fazal} and \cite{kra} as alluded above.  There are three canonical forms for such ODEs. In the case of third-order ODEs, the canonical forms are listed in \cite{Mahomed1996}.  The maximal algebra for such ODEs is dimension seven and this is for the ODE $y'''=0$ which is the simplest in its class. There are the cases of dimensions four and five as well.

Chern \cite{che} provided impetus for the Cartan equivalence method for linearization of certain classes of third-order ODEs by means of contact transformations.  In recent times, Neut and Petitot \cite{neu} obtained more general criteria for equivalence.

Grebot \cite{gre} considered the linearization of third-order ODEs by fibre preserving transformations which are restricted invertible transformations. Ibragimov and Meleshko \cite{ibr} recently re-looked at linearization for such ODEs.  They invoked a direct method. This works for point and contact transformations.

The Cartan equivalence method is currently in vogue \cite{Dweik3} and the authors arrived at an invariant characterization of scalar third-order ODEs for different dimensions of the symmetry Lie algebras.

The book of Schwarz \cite{sch} contains very useful information on algorithmic Lie theory as well as a large number of differential equations of higher orders of physical interest. The paper \cite{oud} of W. R. Oudshoorn and M. Van Der Put gives a new algorithm for computing the Lie symmetries for linear ODEs.

It is often difficult to find the linearizing coordinates with the approaches mentioned. The algorithmic approach proposed in this paper -- combined with the use of software -- is quite efficient, as can be seen from the examples given in the last section.  Notwithstanding, this method is algebraic in nature and is of significance.

\section{Review of real representations of $\mathfrak{sl}(2,\mathbb{R})$}
The algebra $\mathfrak{sl}(2,\mathbb{R})$ plays a special role in finite dimensional Lie algebras of local vector fields in the plane. The basic reason is that it is the only algebra that can occur as a proper Levi complement in such algebras: see Section 4 for an explanation of this fact. Thus the structure of the radical can be understood by using real representation theory of $\mathfrak{sl}(2,\mathbb{R})$.

The real finite dimensional representation theory of $\mathfrak{sl}(2,\mathbb{R})$  is identical to that of  complex finite dimensional representations of  $\mathfrak{sl}(2,\mathbb{C})$; indeed the real representation theory  for any real semisimple algebra with a split Cartan subalgebra  is identical to that of its complexification \cite{Ali}.

Let $X,Y,H$ be the standard basis of $\mathfrak{sl}(2,\mathbb{R})$ with $[X,Y]=H, [H,X]=2X, [H,Y]=-2Y$. If $V$ is a real representation space of $\mathfrak{sl}(2,\mathbb{R})$, then the dimension of the null space of $X$ in $V$ gives the number of irreducible components.
The element  $H$ operates as a real diagonalizable transformation in the null
space of $X$ with nonnegative integer eigenvalues. If $v_{1}, \ldots , v_{r}$ form a basis of eigenvectors of $H$ in the null space of $X$, with eigenvalues $d_{1},d_{2}, \ldots , d_{r}$ then the irreducible components are $\langle v_{1},Yv_{1}, \ldots , Y^{d_{1}}v_{1}\rangle, \ldots , \langle v_{r},Yv_{r}, \ldots , Y^{d_{r}}v_{r} \rangle$: the vectors $v_{1},\ldots , v_{r}$ are the high weight vectors of the representation: see Hilgert-Neeb \cite{hil}, and Knapp \cite{knapp} for further details.

\section{Characterization of the symmetry algebra of the equation $y^{(N)}=0$, $N\ge3$}

In this section we want to show that the structure of the symmetry algebra of $y^{(N)}=0$ ($N\geq 3)$, determines the equation and give an algorithmic procedure to find linearizing coordinates for any ODE of order $N$ whose symmetry algebra has the same Levi decomposition as that of the symmetry algebra of the equation $y^{(N)}=0$.  The Levi decomposition can be obtained by using standard programs in Computer Algebra Systems while the radical can be decomposed into its irreducible components using representation theory of $\mathfrak{sl}(2,\mathbb{R})$ -- as explained in Section 2.

From the classification of  Lie \cite{lie1} and Gonz\'alez-Lopez-Kamran-Olver \cite{gon}, it is clear that only $\mathfrak{sl}(2,\mathbb{R})$ can appear as a proper Levi complement. This can also be deduced from the following result. Before proving this result, let us note that by a proper Levi complement we mean a Levi complement which is neither the null subalgebra nor the whole algebra. Moreover the rank of a Lie algebra of vector fields is the dimension of its generic orbit; note that there is an open set on which all the orbits have a fixed dimension $r$ and the orbits in its complement have dimensions lower than $r$. We refer to this number $r$ as a rank of the Lie algebra of vector fields \cite{azad}.

\begin{proposition}
Let $L$  be a semisimple  Lie algebra of vector fields on an open set $U$ of $\mathbb{R}^K$. Assume that $L$ has a Cartan subalgebra $C$ with all real eigenvalues in the adjoint representation. Let $B\supset  C$ be a Borel subalgebra of $L$  and $\mathfrak{N}$ the nilpotent radical of  $B$.
If $\mathfrak{N}$  has an abelian subalgebra of rank $K$, then $L$ cannot occur as a proper Levi complement of any finite dimensional subalgebra of local vector fields on $\mathbb{R}^K$.
\end{proposition}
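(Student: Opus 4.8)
The plan is to argue by contradiction: assume $L$ does occur as a proper Levi complement, so there is a finite-dimensional Lie algebra of local vector fields $\mathfrak{g}$ on (an open subset of) $\mathbb{R}^K$ with $L\subseteq\mathfrak{g}$, Levi decomposition $\mathfrak{g}=L\ltimes R$, and $R=\mathrm{rad}(\mathfrak{g})\neq 0$. The radical $R$ is an ideal, hence a finite-dimensional module over $L$, and a fortiori over the abelian algebra $\mathfrak{A}\subseteq\mathfrak{N}\subseteq B\subseteq L$, under the adjoint action. I would base everything on two points: first, that every element of $\mathfrak{A}$ acts on $R$ by a nilpotent operator; second, that in suitable local coordinates $\mathfrak{A}$ is precisely the translation algebra $\langle\partial_{x_1},\dots,\partial_{x_K}\rangle$.

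For the first point, I would use that $C\subseteq B$ has all real eigenvalues in the adjoint representation, so $L$ has a real root-space decomposition compatible with $B$ and $\mathfrak{N}=[B,B]=\bigoplus_{\alpha>0}L_\alpha$. Choosing $h\in C$ with $\alpha(h)>0$ for every positive root and setting $c=\min_{\alpha>0}\alpha(h)>0$, the semisimple element $h$ acts on $R$ diagonalizably with real eigenvalues, and each $L_\alpha$ raises an $h$-eigenspace of $R$ by $\alpha(h)\ge c$; since $\dim R<\infty$, any element of $\mathfrak{N}$ — a sum of elements of the various $L_\alpha$ — acts on $R$ nilpotently. In particular $\mathrm{ad}(a)|_R$ is nilpotent for all $a\in\mathfrak{A}$. (Alternatively: $B$ is solvable, hence triangularizable on $R\otimes\mathbb{C}$ by Lie's theorem, so $\mathfrak{N}=[B,B]$ acts by strictly triangular, hence nilpotent, operators — this route does not even use the Cartan hypothesis.)

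For the second point and the contradiction: since $\mathfrak{A}$ has rank $K$, at a generic point $p$ there are $a_1,\dots,a_K\in\mathfrak{A}$ whose values span $T_p\mathbb{R}^K$; as they commute, the simultaneous straightening theorem gives coordinates $(x_1,\dots,x_K)$ near $p$ with $a_i=\partial_{x_i}$, and every $a\in\mathfrak{A}$, commuting with all $\partial_{x_i}$, then has constant coefficients, so $\mathfrak{A}$ equals $T:=\langle\partial_{x_1},\dots,\partial_{x_K}\rangle$ on that neighbourhood; in the usual analytic setting restriction is injective on $\mathfrak{g}$, so $\mathfrak{A}\cong T$ and $T\subseteq L$. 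Now $T$ acts on $R\neq 0$ by the commuting family of nilpotent operators just produced, so Engel's theorem yields $0\neq r\in R$ with $[\partial_{x_k},r]=0$ for all $k$; writing $r=\sum_i r^i\partial_{x_i}$ gives $\partial_{x_k}r^i=0$, so $r$ has constant coefficients, whence $r\in T\subseteq L$ and therefore $r\in L\cap R=0$ — a contradiction.

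I expect the only real obstacle to be the first point: converting ``$\mathfrak{A}$ lies in the nilpotent radical of a Borel subalgebra of $L$'' into ``$\mathfrak{A}$ acts by nilpotent operators in the $L$-module $R$''. Everything else is standard — the simultaneous rectification of independent commuting vector fields, the Engel-type statement that a commuting family of nilpotent operators on a nonzero finite-dimensional space has a nonzero common kernel vector, and the customary identification of $\mathfrak{g}$ with its restriction to a small coordinate ball.
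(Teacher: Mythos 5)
Your proof is correct, and its skeleton is the same as the paper's: straighten the rank-$K$ abelian subalgebra $\mathfrak{A}\subseteq\mathfrak{N}$ to the translations $\partial_{x_1},\dots,\partial_{x_K}$ at a generic point, produce a nonzero element of the radical $R$ commuting with all of them, observe that such a vector field has constant coefficients and hence lies in $\langle\partial_{x_i}\rangle\subseteq L$, and conclude from $L\cap R=0$ that $R=0$. Where you differ is in the key lemma that produces the common-kernel vector. The paper invokes complete reducibility of $R$ as an $L$-module together with highest weight theory for the split algebra $L$: each irreducible summand contains a highest weight vector relative to $B$, which is annihilated by all of $\mathfrak{N}$ and in particular by the $\partial_{x_i}$. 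You instead prove that $\mathfrak{N}=[B,B]$ acts nilpotently on $R$ (either via the positivity of $\alpha(h)$ on the root spaces, or more simply via Lie's theorem applied to the solvable algebra $B$ on $R\otimes\mathbb{C}$) and then apply Engel's theorem to the commuting nilpotent operators $\mathrm{ad}(\partial_{x_i})|_R$. This is a genuine, if modest, gain: it bypasses Weyl's complete reducibility theorem and the highest-weight machinery entirely, only ever needing the action of the abelian subalgebra $\mathfrak{A}$ rather than of all of $\mathfrak{N}$, and it makes explicit the step the paper compresses into one sentence. The step you flagged as the potential obstacle -- converting membership in $\mathfrak{N}$ into nilpotent action on $R$ -- is handled correctly by either of your two arguments, the Lie's-theorem route being the cleaner of the two since it does not even use the split Cartan hypothesis.
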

\begin{proof}  Because of the rank condition, one can introduce coordinates in which the abelian subalgebra of the nilpotent radical  is spanned by $\partial_{x_{i}},$ $i=1, \ldots , K$. Therefore since the radical is a sum of irreducibles for $L$, it must have a high weight vector in the radical -- if the radical is non-zero. This means that there is a vector field $v$ in the radical which commutes with $\partial_{x_{i}},$ $i=1, \ldots, K$ and therefore it must belong to the nilpotent radical of $L$. Therefore the radical must be 0.
\end{proof}
\begin{remark}
If we have a three-dimensional algebra of vector fields $X,Y $ and $Z$ with relations $[X,Y]=Z$ then we can always introduce coordinates so that one of the algebras $\langle X,Z \rangle$ or $\langle Y,Z \rangle$ is $\langle \partial_{x_{1}}, \partial_{x_{2}} \rangle$. The algebra $[X,Y]=Z$ is isomorphic to the nilpotent radical of a Borel subalgebra of $\mathfrak{sl}(3,\mathbb{R})$. Moreover using the main result of \cite{has} the nil radical of the algebra of vector fields in the plane isomorphic to  $\mathfrak{sl} (2, \mathbb{R})\times \mathfrak{sl} (2, \mathbb{R})$ is also of rank 2. Thus using Prop 3.1 we see why only $\mathfrak{sl}(2,\mathbb{R})$ can appear as a proper Levi complement in subalgebras of vector fields in the real plane.
\end{remark}
\begin{remark}
The condition of  a Cartan subalgebra being split can be relaxed if  the semisimple algebra consists of real analytic vector field -- by working in the complexification of real analytic vector fields on $U$.
\end{remark}
Recall that the Levi decomposition of the symmetry algebra of $y^{(N)}=0$ $(N\geq 3)$ is
\begin{equation}
\langle \partial_{x}, \frac{x^2}{2} \partial_{x} +\frac{N-1}{2}xy\partial_{y}, x\partial_{x}+\frac{N-1}{2}y\partial_{y} \rangle \oplus \langle \partial_{y} , x\partial_{y} , \ldots , x^{N-1} \partial_{y} , y\partial_{y} \rangle.
\end{equation}
 We note that the radical is a sum of  an  $N-$dimensional irreducible representation and a $1-$dimensional representation of the Levi complement with highest weight vectors $\partial_{y}$  and  $y\partial_{y}-$ relative to the Borel subalgebra $\langle \partial_{x}, x\partial_{x}+\frac{N-1}{2} y\partial_{y} \rangle$.  The radical is of rank 1 and the nilradical of the Borel subalgebra and the highest weight vector of the $N-$dimensional component give an abelian algebra of rank 2.

In the following proposition, we show that  these  conditions characterize the symmetry algebra of  $y^{(N)} =0$ $(N\geq 3)$.
\begin{theorem} \label{propnd}
Let $A$ be an algebra of  local vector fields in the plane with Levi decomposition $A=S\oplus R$, where $S$ has a basis $X,Y,H$ with $[X,Y]=H, [H,X]=2X, [H,Y]=-2Y$ and the radical $R$ is a sum of an irreducible $N$ dimensional representation  $(N\geq 2)$ and a 1-dimensional representation.

Let $V$  be a highest weight vector relative to the subalgebra $\langle H,X  \rangle$ for the $N$-dimensional component. Assume that $R$ is of rank 1 and the subalgebra $\langle X,V \rangle$ is of rank 2.  Then the canonical coordinates for  $\langle X,V \rangle$ -- after an affine change of  coordinates -- give coordinates in which the algebra coincides with the algebra of the equation  $y^{(N)}=0$, for $N\geq 3$ -- and this is the only hypersurface  of the form  $y^{(N)}=f(x,y,y', \ldots , y^{(N-1)})$ which is invariant  under $A$.
\end{theorem}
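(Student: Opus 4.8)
The plan is to work entirely inside the canonical coordinates adapted to the abelian algebra $\langle X, V\rangle$ and then reconstruct the whole algebra $A$ from the representation-theoretic data. First, since $\langle X,V\rangle$ is abelian of rank $2$ in the plane, I would invoke the standard straightening result (the same device used in the proof of Proposition~3.1) to introduce coordinates, which I will again call $x,y$, in which $X=\partial_x$ and $V=\partial_y$. The next step is to pin down $H$: by the $\mathfrak{sl}(2,\mathbb{R})$ relations $[H,X]=2X$ and the fact that $V$ is a highest weight vector for $\langle H,X\rangle$ with weight $N-1$ (forced because the $N$-dimensional irreducible component has highest weight $N-1$), one gets $[H,X]=2X$ and $[H,V]=(N-1)V$. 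Writing $H=a(x,y)\partial_x+b(x,y)\partial_y$ and imposing these two bracket relations gives a small PDE system whose general solution, after the permitted affine change of coordinates (translation in $x$, which preserves $X=\partial_x$, and rescaling), is $H=x\partial_x+\tfrac{N-1}{2}\,y\partial_y$ — up to an additive function of the form $c\,\partial_y$, which we absorb by a further translation in $y$. The rank-$1$ hypothesis on $R$ is what rules out $H$ having an extra $\partial_y$-component with nontrivial $x,y$-dependence; I would use it here to eliminate spurious terms.

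With $X$ and $H$ fixed, I would then generate the rest of the algebra algorithmically. The $N$-dimensional irreducible component is $\langle V, YV, Y^2V,\dots, Y^{N-1}V\rangle$; but rather than compute $Y$ first, it is cleaner to observe that the weight string under $\mathrm{ad}\,H$ forces these basis vectors to be the $H$-eigenvectors $x^k\partial_y$, $k=0,\dots,N-1$, since $[H,x^k\partial_y]=\bigl(k-\tfrac{N-1}{2}\bigr)x^k\partial_y\cdot(\text{const})$ — more precisely, $x^k\partial_y$ has the correct weights relative to the normalization of $H$, and lowest/highest weight vectors $\partial_y$ and $x^{N-1}\partial_y$ match. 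The one-dimensional component of $R$ must be spanned by a vector commuting appropriately with $S$; the only candidate in the plane that is $\mathrm{ad}\,H$-invariant, $\mathrm{ad}\,X$-invariant modulo the big component, and independent is $y\partial_y$ (up to scaling), which I would verify is forced. Finally $Y$ is recovered from $[X,Y]=H$ together with the weight condition $[H,Y]=-2Y$: this is again a determined ODE system whose solution is $Y=\tfrac{x^2}{2}\partial_x+\tfrac{N-1}{2}xy\,\partial_y$. At this point the algebra in these coordinates is literally expression~(3.1), the symmetry algebra of $y^{(N)}=0$.

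For the last clause, I would argue that any hypersurface $y^{(N)}=f(x,y,y',\dots,y^{(N-1)})$ invariant under all of $A$ must in particular be invariant under the prolongations of $\partial_x$, $\partial_y$, $x\partial_y,\dots,x^{N-1}\partial_y$ and $y\partial_y$. Invariance under $\partial_x$ and $\partial_y$ kills the explicit $x$ and $y$ dependence of $f$; invariance under the prolongation of $x^k\partial_y$ (whose $y^{(j)}$-component is $\binom{k}{j}$-type) successively forces $f$ to be independent of $y^{(N-1)}, \dots, y'$; and invariance under the prolongation of $y\partial_y$, which scales every $y^{(j)}$ by the same weight, forces the remaining constant $f$ to be $0$. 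Hence $y^{(N)}=0$ is the unique such invariant hypersurface.

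The main obstacle I expect is the normalization step for $H$ and $Y$: one must check carefully that the rank-$1$ condition on $R$, together with the highest-weight normalization of $V$, genuinely forces the coefficients with no freedom left beyond the advertised affine change of coordinates — in particular ruling out the possibility that $H$ or $Y$ acquires extra terms that would make $R$ rank $2$ or change its module structure. I would also need to confirm that $N\geq 3$ (rather than merely $N\geq 2$) is where the statement "coincides with the algebra of $y^{(N)}=0$" becomes sharp, since for $N=2$ the symmetry algebra is larger ($\mathfrak{sl}(3,\mathbb{R})$) and the reconstructed $(N{+}2){=}4$-dimensional algebra is only a subalgebra; the hypotheses as stated already build in the correct dimension, so this is a matter of remarking on it rather than an extra argument.
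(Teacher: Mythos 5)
Your overall strategy is the paper's: straighten $X=\partial_x$, $V=\partial_y$ using the rank-2 hypothesis on $\langle X,V\rangle$, reconstruct $H$ and $Y$ from the bracket relations, and then run a prolongation argument on the solvable part to show $y^{(N)}=0$ is the unique invariant hypersurface (that last part of your sketch is fine and matches the paper). But the step you defer as ``the main obstacle I expect'' is precisely where the content of the proof lies, and as written your intermediate claims are not correct. The system $[X,Y]=H$, $[H,Y]=-2Y$ is \emph{not} determined: with $H=-2x\partial_x-(N-1)y\partial_y$ its general solution is $Y=\bigl(-x^2+Cy^{4/(N-1)}\bigr)\partial_x+\bigl(-(N-1)xy+ky^{1+2/(N-1)}\bigr)\partial_y$ (up to the translations you already absorbed). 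Killing $C$ requires the rank-1 hypothesis on $R$: since $[Y,V]\in R$ and $R$ consists of multiples of $\partial_y$, one gets $a_y=0$. Killing $k$ requires the \emph{one-dimensional component}: its weight vector is forced to be $y\partial_y$, and $[Y,y\partial_y]=0$ forces $k=0$. This is exactly how the paper uses each hypothesis, and without these two steps the conclusion fails (the extra terms change the module structure). Note also that your stated reason for invoking rank 1 is misplaced: $H$ is already pinned down by $[H,X]=2X$ and $[H,V]=(N-1)V$ alone (up to translation, and up to a sign -- your $H=x\partial_x+\tfrac{N-1}{2}y\partial_y$ satisfies $[H,\partial_x]=-\partial_x$, not $2\partial_x$); the rank condition is needed for $Y$ and for the radical, not for $H$.

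The second gap is your claim that the weight string under $\operatorname{ad}H$ forces the $N$-dimensional component to be $\langle \partial_y, x\partial_y,\dots,x^{N-1}\partial_y\rangle$. Weights alone do not force this: among rank-1 fields $f(x,y)\partial_y$, the $\operatorname{ad}H$-eigenspace of weight $N-1-2k$ contains $y^{2k/(N-1)}\partial_y$ as well as $x^k\partial_y$ (for $N=3$, $k=1$ these are the perfectly smooth fields $y\partial_y$ and $x\partial_y$, and the weight-$0$ space genuinely has dimension $2$ because the trivial component also sits there). The component is the string $V, [Y,V],\dots,[Y^{N-1},V]$, so you must determine $Y$ \emph{first} -- which is why the paper computes $Y$ before describing the module, the reverse of your order. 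Once $Y$ is correctly normalized, iterating $\operatorname{ad}Y$ on $\partial_y$ does produce $x^k\partial_y$ and the rest of your argument (including the correct remark about $N=2$ versus $N\ge3$) goes through.
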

\begin{proof}
(a) Canonical form of the algebra: Choose coordinates $(x,y)$ so that $X=\partial_{x},V=\partial_{y}$. Let $Y=a\partial_{x}+b\partial_{y}$. Thus, $H=a_{x} \partial_{x}+b_{x} \partial_{y}$. Moreover, $[H,V]=(N-1)V$. By assumption on rank of the radical $[Y,V]=\lambda(x,y)V$. This gives $a_{y}=0$. Using $[H,\partial_{x}]=2\partial_{x}$ and $[H,\partial_{y}]=(N-1)\partial_y$, we see that $a''(x)=-2, b_{xx}=0, b_{xy}=-d,$ where $d=(N-1)$. Hence, $a(x)=-x^2 +\alpha x  +\beta$, $b(x,y)= -dxy+ \gamma x +H(y)$ and
$Y=(-x^2+\alpha x +\beta) \partial_{x}+ (-dxy+\gamma x + H(y) ) \partial_{y}$. Therefore, $H= [X,Y]= -2(x-\alpha /2)\partial_{x} -d (y- \gamma /d)\partial_{y}$. Put $\tilde{x}= x- \alpha /2 , \tilde{y} = y- \gamma /d$. In these coordinates, $H= -2\tilde{x} \partial_{\tilde{x}}-d \tilde{y} \partial_{\tilde{y}}$. Dropping the tildes for notational convenience, we have $X=\partial_{x}$, $Y= (-x^2 + \alpha^2/4 +\beta) \partial_{x} + (-dxy - d \alpha y/2+F(y))\partial_{y}$.

Finally, $[H,Y]=-2Y$, requires $\alpha^2/4+\beta=0$ and $-yF'(y) +F(y) = \alpha y - (2/d) F(y)$. The solutions of this equation are $F(y)= \frac{d\alpha}{2}y+ ky^{1+2/d}$. Therefore, $Y= -x^2 \partial_{x} + ( -dxy+k y^{1+2/d}) \partial_{y}$.

Lastly, we use the fact that the radical also has a 1-dimensional representation of the algebra $ \langle X,Y,H \rangle $. As the radical is of rank 1, the highest weight  vector of the trivial representation is necessarily $y\partial_{y}$. Now $[Y, y\partial_{y} ] = [ky^{1+2/d} \partial_{y}, y\partial_{y}]=0$ can only hold if $k=0$. Thus in the canonical coordinates of the algebra $\langle X,V \rangle $ and by a suitable translation, the algebra has the canonical form
\begin{equation}
\langle \partial_{x}, -({x^2} \partial_{x} +(N-1)xy\partial_{y} ), -(2x\partial_{x} +(N-1)y\partial_{y}) \rangle \oplus \langle \partial_{y} , x\partial_{y} , \ldots , x^{(N-1)} \partial_{y} , y\partial_{y} \rangle
\end{equation}

(b) Invariant hypersurfaces: To determine invariant hypersurfaces of the form $y^{(N)}=f(x,y,y', \dots , y^{(N-1)})$, we first determine invariant hypersurfaces under the maximal solvable subalgebra $B=\langle\partial_{x} , \partial_{y} ,x\partial_{y}, \dots , x^{(N-1)} \partial_{y} , y\partial_{y} \rangle$. We need formulas for the $N$-th order prolongations of these operators. They can be computed as in Section 5 of \cite{has}, following Lie \cite{lie1}.

Denoting $N$-th order prolongation of a vector field $X$ by $X^{(N)}$, we have ${\partial_{x}}^{(N)}= \partial_{x}$, $\partial_{y}^{(N)}= \partial_{y}$ and
$$(x^{a} \partial_{y})^{(N)} = x^{a} \partial_{y} + ax^{a-1}\partial_{y'}+ a(a-1) x^{a-2} \partial_{y''}+ \dots +\left (a(a-1)(a-2) \dots 2 \right) x \partial_{y^{(a-1)}}+ (a!) \partial_{y^{(a)}}$$ for $a\leq N$. Therefore, $(x^{a} \partial_{y})^{(N)}$ is congruent to $(a!) \partial_{y^{(a)}} ~ \bmod ~ \partial_{y}, \ldots ,\partial_{y^{(a-1)}}$. Invariance of the hypersurface $y^{(N)}=f(x,y,y', \ldots , y^{(N-1)})$ under $\partial_{x}^{(N)}$, $\partial_{{y}}^{(N)}$ and $(x^{a} \partial_{y})^{(N)}$ (for $a\leq N$) shows that the equation must be $y^{(N)} =K$. Applying the $N$-th order prolongation of $y\partial_{y}$ $-$ namely $y\partial_{y}+y'\partial_{y'}+ \ldots +y^{(N)}\partial_{y^{(N)}}$, we see that $y^{(N)}=0$ on the hypersurface  $y^{(N)}=K$.
Thus $K$ must be 0.

When $N\geq 3$, the symmetry algebra of $y^{(N)}=0$  is the algebra $A$, while for $N=2$, the symmetry algebra of the equation  contains $A$
as a subalgebra. The algebra $A$ is a parabolic subalgebra in the sense that it contains a maximal solvable subalgebra of the symmetry algebra of $y''=0$.
\end{proof}

The above proposition gives an algorithm for linearizing an ODE with maximal symmetry. This will be implemented in Section 5 for several equations from the literature.

\section{Linearizing coordinates of third-order ODEs}
We will consider in this section the cases where the symmetry algebra is solvable, because in the previous section an algorithm to find linearizing coordinates for any equation equivalent to $y^{(N)}=0$ ($N\geq 3$) has already been given. The canonical forms for equations with solvable symmetry algebras are: $y^{\prime\prime\prime}- ({\phi^{\prime\prime\prime}}/{\phi^{\prime\prime}})y''=0$, where $\phi^{\prime\prime} \neq 0$ \, and \,
$y^{\prime\prime\prime}-(\lambda+\mu )y^{\prime\prime}+\lambda \mu y^{\prime}=0$ $(\lambda, \mu \neq 0)$. The corresponding symmetry algebras are $\mathfrak{g}_{1}= \langle \partial_{y}, x\partial_{y},y\partial_{y},\phi(x) \partial_{y} \rangle $ and $\mathfrak{g}_{2}= \langle \partial_{x}, \partial_{y},y\partial_{y},e^{\lambda x} \partial_{y},e^{\mu x} \partial_{y}>$ $-$ respectively: this is for the case when $\lambda, \mu$ are real and distinct, the remaining cases are given in Remark 4.3 below.

The algebra $\mathfrak{g}_{1}$ is of rank 1, its commutator is abelian and $\mathfrak{g}_{1}/\mathfrak{g}_{1}'$ operates on $\mathfrak{g}_{1}$ with eigenvalue $-1$ of multiplicity 3 while $\mathfrak{g}_{2}'$ has an abelian complement, say $A$, of rank 2, and the weights of $A$ on $\mathfrak{g}_{2}'$ $-$ for an ordered basis of $A$ $-$
are $(0,-1)$, $(\lambda,-1)$ , $(\mu, -1)$. These conditions characterize the algebras and the differential equations -- intrinsically.
\begin{theorem} \label{prop4d}
Let $\mathfrak{g}$ be a four-dimensional algebra of vector fields in the plane which is of rank 1 whose commutator is abelian and such that $\mathfrak{g}/\mathfrak{g}^{\prime}$ operates on $\mathfrak{g}'$ with a non-zero
eigenvalue of multiplicity 3. Then, one can algorithmically introduce coordinates $(x,y)$ so that $\mathfrak{g}'=\langle\partial_{y}, x\partial_{y}, \phi(x)\partial_{y} \rangle$ and $\mathfrak{g} = \langle y \partial_{y}, \partial_{y}, x\partial_{y}, \phi(x) \partial_{y} \rangle $. The generic orbit of $\mathfrak{g}$ in the extended space
$(x,y,y^{\prime}, y^{\prime\prime}, y^{\prime \prime \prime})$ is four-dimensional and such points are in the complement of the set defined by
$y^{\prime \prime} \phi^{\prime \prime \prime} - \phi^{\prime \prime} y^{\prime \prime \prime} =0. $ In these coordinates, this is the only hypersurface invariant under $\mathfrak{g}$.
\end{theorem}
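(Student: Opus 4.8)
The plan is to mirror the two-part structure of Theorem \ref{propnd}: first bring $\mathfrak{g}$ to the asserted canonical form by exploiting the eigenvalue data, then classify the invariant hypersurfaces in those coordinates. For part (a), I would start from the abstract structure: $\mathfrak{g}'$ is three-dimensional abelian and $\mathfrak{g}/\mathfrak{g}'$ acts on $\mathfrak{g}'$ by a single scalar, say $c\neq 0$, with multiplicity $3$. Pick any $T\in\mathfrak{g}\setminus\mathfrak{g}'$; rescaling $T$ we may assume $\operatorname{ad}T$ acts as $-1$ on $\mathfrak{g}'$ (matching $y\partial_y$ acting on $\partial_y, x\partial_y, \phi(x)\partial_y$). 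Since $\mathfrak{g}$ has rank $1$, its generic orbits in the plane are one-dimensional; the abelian subalgebra $\mathfrak{g}'$ already has rank $\le 1$, and one checks it has rank exactly $1$ (otherwise the whole algebra would have rank $0$). So $\mathfrak{g}'$ has a common "base" direction: there are coordinates $(x,y)$ in which every element of $\mathfrak{g}'$ is of the form $f(x,y)\partial_y$. Because $\mathfrak{g}'$ is abelian and three-dimensional, the coefficient functions $f$ span a three-dimensional space closed under the obvious operation; using that $\operatorname{ad}T = -1$ on $\mathfrak{g}'$ forces $T$ to have the form $(\text{something})\partial_x + (y+g(x))\partial_y$ up to scaling, and then $[T,\mathfrak{g}']\subset\mathfrak{g}'$ together with the eigenvalue $-1$ pins down the $\partial_x$-coefficient of $T$ to be independent of $x$ and $y$ (else the bracket would not be a scalar multiple). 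Normalizing that coefficient and translating, we get $T=y\partial_y$ after absorbing $g$; one must check this normalization is achievable by an \emph{algorithmic} change of coordinates, which it is since each step (straightening the $\partial_y$-direction, rescaling, translating) is explicit. Then $\mathfrak{g}'=\langle\partial_y, x\partial_y, \phi(x)\partial_y\rangle$ emerges once one observes that $[y\partial_y, f(x,y)\partial_y] = -f\partial_y$ forces the coefficients $f$ to be functions of $x$ alone, and closure of a three-dimensional space of functions $\langle f_1, f_2, f_3\rangle$ under nothing in particular — here the constraint is just dimension $3$ and the ability to normalize $f_1=1$, $f_2=x$ by an affine change of $x$ and a linear recombination — gives the third as an arbitrary $\phi(x)$ with $\phi''\neq 0$ (the condition $\phi''\neq 0$ being exactly that the three functions are linearly independent modulo the affine normalization).

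For part (b), the invariant-hypersurface computation, I would prolong the generators to the jet space $(x,y,y',y'',y''')$. The prolongations of $\partial_y$, $x\partial_y$, $\phi(x)\partial_y$ are
\[
\partial_y,\qquad x\partial_y+\partial_{y'},\qquad \phi\partial_y+\phi'\partial_{y'}+\phi''\partial_{y''}+\phi'''\partial_{y'''},
\]
and the prolongation of $y\partial_y$ is $y\partial_y+y'\partial_{y'}+y''\partial_{y''}+y'''\partial_{y'''}$. Invariance of a hypersurface $y'''=f(x,y,y',y'')$ under $\partial_y^{(3)}$ kills $y$-dependence; under $(x\partial_y)^{(3)}$ one gets $\partial_{y'}f = 0$ (since $\partial_{y'}$ annihilates the other coordinates surviving), so $f=f(x,y'')$; under $(\phi\partial_y)^{(3)}$ the operator $\phi''\partial_{y''}+\phi'''\partial_{y'''}$ must annihilate $y'''-f(x,y'')$, giving $\phi''\,\partial_{y''}f = -\phi'''$, i.e. $\partial_{y''}f = -\phi'''/\phi''$, a function of $x$ only; integrating, $f = -(\phi'''/\phi'')\,y'' + h(x)$. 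Finally invariance under the prolonged $y\partial_y$, which is homogeneous of degree $1$ in $(y,y',y'',y''')$, forces $h(x)=0$. Hence the only invariant hypersurface of the prescribed form is $y''' = (\phi'''/\phi'')\,y''$, equivalently $y''\phi''' - \phi'' y''' = 0$, which is exactly the complement-of-generic-orbit locus asserted. One should also note the generic orbit in the extended space is four-dimensional: the four prolonged generators are generically linearly independent precisely off the set $\{y''\phi'''-\phi''y'''=0\}$, which follows by computing the $4\times 4$ determinant of their leading components (it is proportional to $y''\phi'''-\phi''y'''$ after row reduction using $\phi''\neq 0$).

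The main obstacle I expect is part (a), specifically the claim that the normalization to $T=y\partial_y$ and $\mathfrak{g}'=\langle\partial_y, x\partial_y, \phi(x)\partial_y\rangle$ can be carried out purely from the abstract hypotheses (rank $1$, $\mathfrak{g}'$ abelian, single eigenvalue of multiplicity $3$) by explicit coordinate changes — in particular ruling out degenerate realizations where, say, $\mathfrak{g}'$ fails to have a well-defined common direction, or where $T$ acquires an irremovable $\partial_x$-component with nontrivial $x$-dependence. The rank-$1$ hypothesis is what forecloses these: it forces the generic orbits to be curves, hence (after straightening) the $\partial_y$-direction to be common to all of $\mathfrak{g}'$, and then the bracket relations do the rest. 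The hypersurface part (b) is routine once the canonical form is in hand, being a direct prolongation calculation entirely parallel to the one in the proof of Theorem \ref{propnd}.
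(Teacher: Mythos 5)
Your proposal is correct and follows essentially the same route as the paper: use the rank-$1$ and abelian-commutator hypotheses to straighten $\mathfrak{g}'$ to $\langle\partial_y, x\partial_y, \phi(x)\partial_y\rangle$, normalize the representative of $\mathfrak{g}/\mathfrak{g}'$ to $y\partial_y$ via $\tilde y = y+g(x)$, and then pin down the unique invariant hypersurface by a prolongation computation (the paper reads off the generic-orbit locus from the determinant of the same $4\times4$ coefficient matrix you describe). One cosmetic slip: invariance under the prolonged $\phi\partial_y$ gives $\partial_{y''}f=+\phi'''/\phi''$, not $-\phi'''/\phi''$, but your final equation $y'''=(\phi'''/\phi'')y''$ is nevertheless the correct one.
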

\begin{proof}
By assumption $\mathfrak{g}/\mathfrak{g}'$ operates on $\mathfrak{g}'$ with a nonzero eigenvalue of multiplicity 3. By scaling, we can assume that this eigenvalue is $-1$. Let the eigenvectors be $f_{1},f_{2},f_{3}$. The whole algebra is of rank 1 and the commutator is abelian. Therefore, we can introduce coordinates $(x,y)$ so that
$f_{1}=\partial_{y}, f_{2}= x\partial_{y},f_{3}=\phi(x) \partial_{y}$, with $1,x,\phi(x)$ linearly independent. Let $q$ be a representative of  $\mathfrak{g}/\mathfrak{g}^{\prime}$, say $q=f(x,y)\partial_{y}$. Then, $[q, \partial_{y}]=-\partial_{y}$ implies $\frac{\partial f}{\partial y}=1$. Hence,
$f=y+g(x)$. Let $\tilde{x}=x, \tilde{y}= y+g(x)$. In these coordinates, $\partial_{\tilde{x}}= \partial_{x} -g'(x), \partial_{\tilde{y}}= \partial_{y}$. Thus $\mathfrak{g}$ has the form stated in the statement of the proposition. For notational convenience, we write $\mathfrak{g}$ in its canonical form as $\mathfrak{g}=\langle y\partial_{y}, \partial_{y}, x\partial_{y},\phi(x) \partial_{y} \rangle $, where $\phi''\neq 0$.

Now the generic orbits of a Lie algebra of  analytic vector fields $L$ on an open set of $\mathbb{R}^{N}$ are the same as that of the algebra of vector fields obtained from the matrix of coefficients of $L$ by putting it in reduced row echelon form \cite{merker} (p.102, Proposition 4), \cite{azad}. Thus the rank of the coefficient matrix gives the dimension of the generic orbit.

We want to describe such orbits in the extended space $(x,y,y',y'',y''')$.

The third prolongation of $\mathfrak{g}$ can be computed as in Section 5 of \cite{Ali}, following Lie \cite{lie1} (pp. $261-274$).  As the third-order prolongation of $f(x)\partial_{y}$  is  $f(x)\partial_{y}+f'(x) \partial_{y'}+ f''(x) \partial_{y''}+ f'''(x) \partial_{y'''}$ and the third prolongation of $y\partial_{y}$ is $y\partial_{y}+y'\partial_{y'}+y''\partial_{y''}+y'''\partial_{y'''}$, the rank of the vector fields coming from the given basis of $\mathfrak{g}$  in the third prolongation is the rank of
\begin{equation}
\begin{bmatrix}
1 & 0 & 0 & 0 \\
x & 1 & 0 & 0 \\
y & y' & y'' & y''' \\
\phi & \phi' & \phi'' & \phi'''
\end{bmatrix}
\end{equation}
Thus the generic orbit is four-dimensional on the complement of the hypersurface defined by $y''\phi'''-y'''\phi''=0$. Therefore, bringing the algebra into its canonical form determines the equation $y''' = (\phi'''/\phi'') y''$.

We now show that this is the only equation in these coordinate of the form $y'''=f(x,y,y',y'')$ that is invariant under the third prolongation of  $\mathfrak{g}$.
Invariance under  $\partial_{y}$ and $x\partial_{y}+ \partial_{y'}$ gives $y'''=F(x,y'')$.  Invariance under the third order prolongation of $y\partial_{y}$  shows that $y'''=y''F_{y''}$ on the hypersurface $y'''=F(x,y'')$.  Thus  $y''F_{y''} =F(x,y'')$. Working with canonical coordinates of the field $y''\partial_{y''}$ we find that  $F(x,y'') = y'' G(x)$.

Finally, invariance under the third-order prolongation of $\phi(x)\partial_{y}$ shows that $\phi'''= \phi''G(x)$. Thus the invariant equation must be  $y'''=y''({\phi'''}/{\phi''})$.
\end{proof}
\textbf{Note.} This shows that expressing the symmetry algebra $L$ of a third-order ODE whose algebra is described as in the statement of \Cref{prop4d} in the coordinates $(x,y)$ described in the proof of this proposition linearizes the equation.
\begin{theorem} \label{prop5d}
{Let $\mathfrak{g}$ be a five-dimensional Lie algebra of vector fields in the plane whose commutator $\mathfrak{g}^{\prime}$ is abelian and
of rank 1. Assume that $\mathfrak{g}^{\prime}$ has an abelian complement $A$. Assume also that the weights of $\mathfrak{g}/\mathfrak{g}^{\prime}$ on $\mathfrak{g}^{\prime}$ with respect to an ordered basis $(\bar{e}_{1},\bar{e}_{2})$, $e_{i} \in A$ of $\mathfrak{g}/\mathfrak{g}^{\prime}$, are
$(0, k)$, $(\lambda,k)$, $(\mu, k)$ with $k,\lambda,\mu\neq 0$ and $\lambda \neq \mu$ with weight vectors $e_{3},e_{4},e_{5}$. Moreover, assume that if $w$ is the weight vector of weight $(0,k)$, then $\langle Z_{A}(w),w \rangle$ is of rank 2. Then in the canonical coordinates for $\langle Z_{A}(w),w \rangle$ the algebra $
\mathfrak{g} =  \langle \partial_{x}, x\partial_{y} \rangle \oplus \langle \partial_{y}, e^{\lambda x}\partial_{y}, e^{\mu x}\partial_{y} \rangle.
$
The generic $\mathfrak{g}$ -- orbit in the extended space $(x,y,y^{\prime}, y^{\prime \prime}, y^{\prime \prime \prime})$ is five-dimensional and points with lower
dimension orbits are those which satisfy the equation $
y^{\prime\prime\prime}-(\lambda+\mu)y^{\prime\prime}+\lambda\mu y^{\prime}=0$.  Moreover, in these coordinates this is the only hypersurface invariant under $\mathfrak{g}$.
}
\end{theorem}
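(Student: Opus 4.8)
The plan is to follow the pattern of the proof of \Cref{prop4d}: first put $\mathfrak{g}$ into a canonical coordinate form, then read off the invariant equation as the locus where the generic orbit dimension in the third prolongation drops, and finally check that this equation is the only invariant hypersurface of the form $y'''=f(x,y,y',y'')$. To normalize $\mathfrak{g}$, write $A=\langle e_1,e_2\rangle$ with $e_i$ lifting the ordered basis $\bar e_i$ of $\mathfrak{g}/\mathfrak{g}'$. Since $w=e_3$ has weight $(0,k)$ with $k\neq0$, one gets $[e_1,w]=0$ and $[e_2,w]=kw$, so $Z_A(w)=\langle e_1\rangle$; the hypothesis that $\langle Z_A(w),w\rangle$ is abelian of rank $2$ then lets one choose coordinates $(x,y)$ with $e_1=\partial_x$ and $w=\partial_y$. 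As $\mathfrak{g}'$ has rank $1$ and contains $\partial_y$, every element of $\mathfrak{g}'$ is a multiple of $\partial_y$; writing $e_4=h_4(x,y)\partial_y$, $e_5=h_5(x,y)\partial_y$, abelianness of $\mathfrak{g}'$ ($[e_3,e_i]=0$) forces $h_i=h_i(x)$, and the weight relations $[e_1,e_4]=\lambda e_4$, $[e_1,e_5]=\mu e_5$ read $h_4'=\lambda h_4$, $h_5'=\mu h_5$, so after rescaling $\mathfrak{g}'=\langle\partial_y,e^{\lambda x}\partial_y,e^{\mu x}\partial_y\rangle$. Finally, writing $e_2=a\partial_x+b\partial_y$, the relation $[e_1,e_2]=0$ forces $a=a(y)$, $b=b(y)$, and $[e_2,e_3]=ke_3$ forces $a'=0$, $b'=-k$; after subtracting a multiple of $e_1$, translating $y$, and rescaling, $e_2=y\partial_y$, and the remaining weight relations then hold automatically. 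Thus in these coordinates $\mathfrak{g}$ coincides with $\mathfrak{g}_2=\langle\partial_x,y\partial_y\rangle\oplus\langle\partial_y,e^{\lambda x}\partial_y,e^{\mu x}\partial_y\rangle$.

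For the orbits, as in \Cref{prop4d} the generic orbit dimension of the third prolongation equals the rank of the coefficient matrix of the prolonged generators \cite{merker,azad}. Using $(f(x)\partial_y)^{(3)}=f\partial_y+f'\partial_{y'}+f''\partial_{y''}+f'''\partial_{y'''}$ and $(y\partial_y)^{(3)}=y\partial_y+y'\partial_{y'}+y''\partial_{y''}+y'''\partial_{y'''}$, this matrix (in coordinates $x,y,y',y'',y'''$) is, up to the nonvanishing factors $e^{\lambda x}$ and $e^{\mu x}$,
\[
\begin{bmatrix}
1 & 0 & 0 & 0 & 0\\
0 & y & y' & y'' & y'''\\
0 & 1 & 0 & 0 & 0\\
0 & 1 & \lambda & \lambda^2 & \lambda^3\\
0 & 1 & \mu & \mu^2 & \mu^3
\end{bmatrix}.
\]
Its last three rows span a $3$-dimensional space (a Vandermonde minor on the nodes $0,\lambda,\mu$ equals $\lambda\mu(\mu-\lambda)\neq0$), and the linear functional annihilating that span, evaluated on $(y',y'',y''')$, is $y'''-(\lambda+\mu)y''+\lambda\mu y'$. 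Hence the matrix has rank $5$ off the set $y'''-(\lambda+\mu)y''+\lambda\mu y'=0$ and rank $4$ on it, so the generic $\mathfrak{g}$-orbit in $(x,y,y',y'',y''')$ is five-dimensional and the orbits of lower dimension are exactly those lying on this hypersurface.

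For uniqueness, suppose $y'''=f(x,y,y',y'')$ is invariant under the third prolongation of $\mathfrak{g}$. Invariance under $\partial_y$ gives $f_y=0$; applying $(e^{\lambda x}\partial_y)^{(3)}$ and $(e^{\mu x}\partial_y)^{(3)}$ to $y'''-f$, and using that the result is independent of $y'''$, gives $f_{y'}+\lambda f_{y''}=\lambda^2$ and $f_{y'}+\mu f_{y''}=\mu^2$, whence (since $\lambda\neq\mu$) $f_{y''}=\lambda+\mu$ and $f_{y'}=-\lambda\mu$, so $f=(\lambda+\mu)y''-\lambda\mu y'+g(x)$; invariance under $\partial_x$ gives $g'=0$, and then invariance under $(y\partial_y)^{(3)}$ gives $g=0$. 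Thus $y'''-(\lambda+\mu)y''+\lambda\mu y'=0$ is the unique such invariant hypersurface, and it coincides with the degeneracy locus found above.

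The step I expect to be the main obstacle is the normalization of the algebra: one must invoke the rank-$1$ hypothesis on $\mathfrak{g}'$ and the rank-$2$ hypothesis on $\langle Z_A(w),w\rangle$ at exactly the right points, and then verify that the successive coordinate normalizations are mutually compatible and that the remaining structure relations are forced rather than contradictory. The two prolongation computations are the constant-coefficient analogue of those in \Cref{prop4d}, with $1,x,\phi$ replaced by $1,e^{\lambda x},e^{\mu x}$ and the Wronskian-type vanishing condition replaced by the linear ODE with constant coefficients.
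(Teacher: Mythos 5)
Your proposal is correct and follows essentially the same route as the paper: straighten $\langle Z_A(w),w\rangle=\langle e_1,e_3\rangle$ to $\langle\partial_x,\partial_y\rangle$, use the rank-1/abelian hypotheses and the weight relations to force the canonical form, read off the invariant equation from the rank drop of the prolonged coefficient matrix, and then verify uniqueness of the invariant hypersurface. The only (harmless) differences are that the paper pins down $e_2$ by deriving $\alpha=0$ directly from $[e_2,e_4]=ke_4$ rather than by adjusting the basis of $A$, and that your uniqueness step solves the linear system for $f_{y'},f_{y''}$ directly instead of passing through $F=y''G(y'/y'')$ as the paper does.
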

\begin{proof}
As the  proof is very similar to the proof of the previous proposition, we will omit some details.  Using the notations in the statement of the proposition,  let $e_{3},e_{4},e_{5}$  be vectors in $\mathfrak{g}'$ of weights $(0,k),(\lambda, k) , (\mu,k)$ for $A$.  By assumption, the centralizer of  $e_{3}$ in $A$ - namely $\langle e_{3},e_{1} \rangle$ is of rank 2. Choose coordinates so that $e_{1}=\partial_{x},e_{3} = \partial_{y}$.  By commutativity of $\mathfrak{g}'$ and the assumption that  $\mathfrak{g}'$ is of rank 1, we have $e_{4}=f(x)\partial_{y},e_{5} = g(x)\partial_{y}$.

As $e_{4}, e_{5}$  are eigenvectors for $\partial_{x}$ with eigenvalues $\lambda, \mu$  we may assume that $e_{4} = e^{\lambda x} \partial_{y}, e_{5} = e^{\mu x} \partial_{y}$. Now  $e_{2}$ operates on $\mathfrak{g}'$ as a nonzero scalar $k$. By scaling $e_2$ , we may assume that $k=-1$.  Moreover as $e_2$ commutes with $e_1$ , it  must be of the form $e_{2} = a(y) \partial_x +b(y) \partial_{y}$. Thus $[e_2, e_3]=-e_3$, gives $a(y) = \alpha , b(y) = y + \beta$.
Finally, $[e_2, e_4]=-e_4$ gives $\alpha =0$ and  $e_2 = y+\beta $.  Thus in the coordinates $\tilde{x}=x,\tilde{y}= y+\beta$, the algebra $\mathfrak{g}$  is of the form given in the statement of the proposition.

Dropping the tildas for notational convenience, we see after computing the third prolongations of the basis of  $\mathfrak{g}$, that the coefficient matrix of the generators of $\mathfrak{g}$ is singular at the points in the extended space $(x,y,y',y'',y''')$ where $y'\lambda \mu -y'' (\lambda +\mu)+y'''=0$. Arguing as in the previous proposition,  the generic orbit outside this linear subspace is five-dimensional.

Suppose that $y'''= F(x,y,y',y'')$ is an invariant hypersurface for $\mathfrak{g}$. Invariance under the prolongations of $\partial_{y}$  and $\partial_{x}$ shows  $F$ is a function of only $y',y''$.  Using invariance under the third prolongation of $y\partial_{y}$ we see that $y'''=y' F_{y'}+y''F_{y''}$ on the set $y'''=F(y',y'')$ . Thus  $y'F_{y'}+y''F_{y''}=F$.

Working with canonical coordinates for the field $y'\partial_{y'}+y''\partial_{y''}$ we see that $F(y',y'')= y''G(y'/y'')$. Invariance under the third order prolongation of $e^{ax}\partial_{y}$  implies that $G(w)$ is a linear function of $w$. Thus the equation of the invariant hypersurface is $y'''=y''(\alpha x+ \beta y'/y'')$. Hence invariance under $\langle e^{\lambda x}\partial_{y}, e^{\mu x}\partial_{y} \rangle$  determines the values of $\alpha, \beta$  and the equation of the invariant hypersurface is $y'''-(\lambda+\mu) y''+ \lambda \mu y'=0$.
\end{proof}
\begin{remark}
The canonical forms of the cases when $\lambda$ is complex and non-real or when $\lambda =\mu$ are $\mathfrak{g}_{1}= \langle \partial_{x}, y\partial_{y} \rangle \oplus \langle \partial_{y} ,e^{ax} \cos{(bx)}\partial_{y},e^{ax} \sin{(bx)}\partial_{y} \rangle, $ where $\lambda =a+\sqrt{-1}\,b$ and $\mathfrak{g}_{2}= \langle \partial_{x}, y\partial_{y}\rangle \oplus \langle\partial_{y} ,e^{\lambda x} \partial_{y},x e^{\lambda x} \partial_{y} \rangle$, respectively. The proofs of these assertions are similar to the proof given above and are therefore omitted.
\end{remark}

\section{Applications}
We now consider various nonlinear differential equations of order $N\geq 3$ and determine their linearizing coordinates using  \Cref{propnd}, \Cref{prop4d}, \Cref{prop5d}. With the help of advanced packages Differential Geometry and Lie Algebras in Maple 18, we algorithmically implement these propositions followed by the determination of symmetry algebras. An advantage of this approach is that it relies only on the structure of symmetry algebras and a knowledge of the structure constants suffices to determine linearizing coordinates. Apart from the examples given below, reduction to canonical forms of linearizable equations in Schwarz \cite{sch} -- e.g. p. 234 -- p. 245 -- and equations on p. 83 of Kamke (see \cite{kamke,kamke1}) can also be linearized as applications of the results of this paper.
% Apart from the examples given below, the linearizing coordinates of equations given on page (240), of Kamke's book \cite{kam} can be obtained easily from the theory developed above.
\newline\newline
\textbf{a. Third-Order ODEs}
There are three classes of linearizable third-order ODEs corresponding to symmetry algebras of dimensions 4, 5 and 7, of which the first two symmetry algebras are solvable while the other algebra is non-solvable. Below we consider examples from each of these classes and linearize them accordingly.
\newline
\emph{Class 1:}
Consider the nonlinear ODE
\begin{equation}
y''' - \frac{(x+3 -3(x+2))y'' - ((x+2)y^{'}-x-3)y'^{2}}{x+2}=0,\label{ode3}
\end{equation}
which has a solvable four-dimensional symmetry algebra of rank 1
\begin{equation}
e_{1} = \partial_{y}, ~e_{2} = e^{-y}\partial_{y}, ~ e_{3} = xe^{-y}\partial_{y}, ~e_{4} = e^{x-y}\partial_{y}.
\end{equation}
where we invoke \Cref{prop4d} to linearize it. Its derived algebra is $\langle e_{2},e_{3},e_{4} \rangle$ of rank 1 and $\mathfrak{g}/\mathfrak{g}'$ whose representative is $\partial_{y}$, operates on $\mathfrak{g}'$ with eigenvalues $-1$ of multiplicity 3. We choose coordinates so that $e_{2}=\partial_{\tilde{y}}$ and $e_{3}= \tilde{x}\partial_{\tilde{y}}$. Therefore,  $\tilde{x}=x,\tilde{y}(x)=e^{y(x)}$, and ODE (\ref{ode3}) is reduced to its canonical form given in Gat \cite{gat}
\begin{equation}
\tilde{y}''' -\frac{\tilde{x}+3}{\tilde{x}+2} \tilde{y}^{\prime\prime} =0.
\end{equation}
\emph{Class 2a:}
A nonlinear ODE
\begin{equation}
y''' - \frac{3y^{\prime\prime 2}}{y^{'}}-x y'^{4}=0, \label{ode2}
\end{equation}
has a five-dimensional solvable Lie algebra
\begin{equation}
e_{1} = \partial_{y}, ~e_{2} = x\partial_{x}, ~ e_{3} = e^{-y}\partial_{x}, ~e_{4} = e^{y/2}\sin ( \sqrt{3} \,y/2 )\partial_{x}, ~e_{5} = e^{y/2}\cos ( \sqrt{3} \,y/2 )\partial_{x},
\end{equation}
with relations
\begin{align}
&[e_1, e_3] = -e_3, [e_1, e_4] = (\sqrt3 /2) e_5+(1/2)e_4, [e_1, e_5] = (1/2)e_5-(\sqrt{3} /2) e_4, \nonumber \\
&[e_2, e_3] = -e_3, [e_2, e_4] = -e_4, [e_2, e_5] = -e_5\,.
\end{align}
Its derived algebra is
\begin{equation}
\mathfrak{g}^{'} = \langle e_{3}, \frac{1}{2} ( e_{4} + \sqrt{3} e_{5}) ,  \frac{1}{2} ( e_{5} - \sqrt{3} e_{4}) \rangle.
\end{equation}
We compute the centralizer of $e_{3}$ in $\langle e_{1},e_{2} \rangle$ which is $\langle e_1 - e_{2} \rangle$. Moreover, $e_3$ and $e_{1}-e_{2}$ is of rank 2.
%If we substitute $\tilde{e_{1}}=e_{1}-e_{2}$ and $\tilde{e_{2}}=e_{2}$.
The element $e_2$ has the eigenvalues $-1$ on $\mathfrak{g}'$ and $e_{1}-e_2$ has eigenvalues $0$ and $\pm \sqrt{3}$. Thus, by using the \Cref{prop5d}, the canonical coordinates corresponding to algebra $\langle e_{3}, e_{1}-e_{2} \rangle$, are $x=e^{-r}v(r), ~y(x) = r$, that transforms nonlinear ODE (\ref{ode2}) into
\begin{equation}
v^{\prime \prime \prime} -3v^{\prime \prime} +3v^{\prime}=0.
\end{equation}

\emph{Class 2b:}
The nonlinear ODE
\begin{equation}
y'y'''=xy'^{5} +y'^{4} -y''y'^2 +3 y''^2, \label{ode2b}
\end{equation}
has a five-dimensional solvable Lie algebra
\begin{equation}
e_{1} = \partial_{y}, ~e_{2} = x\partial_{x}, ~ e_{3} = \sin{y} \partial_{x}, ~e_{4} = \cos{y}\partial_{x}, ~e_{5} = e^{-y} \partial_{x}\,,
\end{equation}
with derived algebra $\mathfrak{g}' = \langle e_{3},  e_{4} ,  e_{5}  \rangle$. The one- and two-dimensional invariant subspaces of $A=\langle e_1, e_2 \rangle $ are $\langle e_5 \rangle $ and $\langle e_3, e_4 \rangle $, respectively. By introducing the new basis $\tilde{e}_{1}=e_1 - e_2, \tilde{e}_{2}=e_2$, we find that $[\tilde{e}_{1}, e_3]= e_4 +e_3, [\tilde{e}_{1}, e_4]= e_4 - e_3, [\tilde{e}_{1},e_5]=0$, and $e_2$ operates on $\mathfrak{g}'$ as a multiplication by $-1$. Moreover, the centralizer of $e_{5}$ in $A$ is $\tilde{e}_{1}$. The eigenvalues of $\tilde{e}_{1}$ on the space spanned $\langle e_{3},e_{4} \rangle$ are $1\pm i$. By using \Cref{prop5d} and Remark 4.3, the canonical coordinates corresponding to algebra $\langle \tilde{e}_{1} , e_{5} \rangle $, will give us the linearizing coordinates. The canonical coordinates are given by $x=e^{-r}v(r), ~y(x) = r$, which transforms nonlinear ODE (\ref{ode2b}) into
\begin{equation}
v^{\prime \prime \prime} -2v^{\prime \prime} +2v^{\prime}=0.
\end{equation}

\emph{Class 3:}
We now apply our algebraic tools to linearize a third-order ODE (Example on p.82, Kamke \cite{kamke}) which also involves an arbitrary parameter
\begin{equation}
y^2 y^{\prime\prime \prime } + ayy'y''+by'^3=0, \ \label{ode4}
\end{equation}
where $~9b=a^2-3a, ~a\neq -3$. It has a seven-dimensional Lie algebra generated by the vector fields
\begin{align}
&e_{1} = x\partial_{x}, ~e_{2} = \partial_{x}, ~ e_{3} = y\partial_{y}, ~e_{4} = y^{-a/3}\partial_{y}, e_{5} = x{y}^{-a/3}\partial_{y},  \nonumber \\
&e_{6} = x^2{y}^{-a/3}\partial_{y},~e_{7} = \frac{x^2}{2}\partial_{x}+\frac{3xy}{a+3}\partial_{x}\,.
\end{align}
The Levi decomposition is
\begin{equation}
\mathfrak{g}= r \oplus s = \langle e_3, e_4, e_5, e_6 \rangle \oplus \langle e_1+ \frac{3}{a+3}e_3, e_2, e_7 \rangle.
\end{equation}
We now bring the semisimple part to the standard form of $\mathfrak{sl}(2,\mathbb{R}),$ by calling $H=-2e_{1}-6/(a+3) e_{3}$, $X=-2e_{2}$ and $Y=e_{7}$. We have $[H,e_{3}]=0$ and $[H,e_{6}]=-2e_{6}$. Moreover, $\langle e_{6}, e_{7} \rangle $ forms a rank 2 algebra. This gives us the canonical coordinates
\begin{equation}
\tilde{x}=\frac{6y^{(a+3)/3}}{(a+3) x^2},~ \tilde{y}(\tilde{x})=-\frac{2}{x},
\end{equation}
that convert the equation into $\tilde{y}'''=0$.

% A third-order ODE of some physical interest is
% \begin{equation}
% y^{\prime\prime \prime } + \frac{3 y^{\prime} y^{\prime\prime}}{y} -3 y^{\prime\prime}-\frac{3y^{\prime 2}}{y} + 2y^{\prime} =0, \label{ode4}
% \end{equation}
% which has a seven dimensional Lie algebra generated by the vector fields
% \begin{align}
% &e_{1} = \partial_{x}, ~e_{2} = y\partial_{y}, ~ e_{3} = \frac{1}{y}\partial_{y}, ~e_{4} = e^{-x}\partial_{x}, e_{5} = \frac{e^{x}}{y}\partial_{y}, ~e_{6} = \frac{e^{2x}}{y}\partial_{y}, ~e_{7} = {e^{x}}\partial_{x}+ye^{x}\partial_{x}\,.
% \end{align}
% The Levi decomposition is
% \begin{equation}
% \mathfrak{g}= r \oplus s = <e_2, e_3, e_5, e_6> \oplus < e_1+ \frac{1}{2}e_2, e_4, e_7>.
% \end{equation}
% We now bring the semisimple part in the standard relations of $\mathfrak{sl}(2,\mathbb{R})$ by calling $H=2e_{1}+e_{2}$, $X=e_{7}$ and $Y=e_{4}$. Since, we have $[H,e_{2}]=0$ and $[H,e_{6}]=2e_{6}$. Moreover $<e_{6}, e_{7}>$ forms a rank 2 algebra and gives us the canonical coordinates $\tilde{x}=-e^{-x},\, \tilde{y}(\tilde{x})=y^2e^{-2x}/2$, that converts the equation into $\tilde{y}'''=0$.
\textbf{c. Fourth-Order ODE} We now linearize a fourth-order ODE given in \cite{ibr1} using \Cref{propnd}. The equation
\begin{equation}
y^{3} y^{(4)} +4k y^2 y^{\prime} y^{\prime\prime\prime}+3ky^2 y^{\prime\prime 2} +6k(k-1) y y^{\prime 2} y^{\prime \prime} +k(k-1)(k-2)y^{\prime 4}=0. \label{4order1}
\end{equation}
has symmetry algebra
\begin{align}
&e_{1} = x\partial_{x}, ~ e_{2}	 = \partial_{x}, ~e_{3} = y\partial_{y}, ~ e_{4} = y^{1-a} \partial_{y}, ~ e_{5} = xy^{1-a} \partial_{y},\nonumber\\
& e_{6} = x^2 y^{1-a} \partial_{y},~ e_{7} = x^3 y^{1-a} \partial_{y},~ e_{8} =ax^2\partial_{x}+ 3xy \partial_{y},
\end{align}
where $a=k+1$. The Levi decomposition is
\begin{equation}
\mathfrak{g} = \langle e_{3},e_{4},e_{5},e_{6}, e_{7} \rangle  \oplus \langle e_{1}+(3/2a) e_{3}, e_{2}, e_{8} \rangle.
\end{equation}
In order to bring the semisimple part to the standard form of $\mathfrak{sl}(2,\mathbb{R})$, we introduce $H=-2 e_{1}-3e_{3}/a$, $X=-2e_{2}$ and $Y=e_{8}$. Now $ \langle e_{2},e_{4} \rangle $ form a rank 2 algebra which gives us the linearizing coordinates $\tilde{x}=x, ~ \tilde{y}(\tilde{x})  = y^{a}/a $ that linearize the ODE \eqref{4order1} and bring it in the required form $\tilde{y}^{\prime \prime \prime \prime} =0$.
 \newline\newline
\textbf{d. Fifth-Order ODE} An example of a fifth-order ODE that can be linearized using the same procedure is given below. Consider the maximal algebra ODE
\begin{equation}
y^{\prime 3} y^{(5)} - 15 y^{\prime 2} y^{\prime \prime} y^{(4)} -10 y^{\prime 2} y^{ \prime \prime \prime 2} + 105y^{\prime} y^{\prime \prime 2} y^{\prime
\prime \prime} -105 y^{\prime \prime 4}=0 \label{5order},
\end{equation}
which has Lie algebra generators
\begin{align}
&e_{1} = \partial_{x}, ~ e_{2} = x\partial_{x}, ~e_{3} = y\partial_{y}, ~ e_{4} = y \partial_{x}, ~ e_{5} = y^2 \partial_{x}, \nonumber\\
& e_{6} = y^3 \partial_{x},~ e_{7} = y^4 \partial_{x},~ e_{8} =4xy\partial_{x}+ y^2 \partial_{y}.
\end{align}
As before we use \Cref{propnd}, to find the linearizing coordinates $\tilde{x}=y(x), \tilde{y}(\tilde{x})=x,$ that transforms ODE \eqref{5order} into $\tilde{y}^{(5)}=0$.

{\bf Acknowledgments}. FMM thanks the N.R.F. of RSA for funding support.

% \section{Conclusion}
% It was Lie who showed that all second-order ordinary differential equations can be transformed
% to each other by means of contact transformations. However, it is not the case with third-order
% ODEs. Therefore, linearizing ODEs using contact transformations becomes an interesting topic. In the future, we intend to extend our approach to contact transformations so that linearization problem of a large class of ODEs can be addressed.

\end{document}